\documentclass[a4paper,12pt]{article}
\usepackage[utf8]{inputenc}
\usepackage{amssymb}
\usepackage{amsmath}
\usepackage{amsthm}
\usepackage{tikz}
\usepackage{url}

\theoremstyle{plain}
\newtheorem{defi}{Definition}[section]
\newtheorem{thm}{Theorem}[section]
\newtheorem{prop}{Proposition}[section]

\theoremstyle{remark}

\DeclareMathOperator{\Gal}{Gal}
\DeclareMathOperator{\Aut}{Aut}

\DeclareMathOperator{\Id}{Id}

\DeclareMathOperator{\md}{mod \;}
\DeclareMathOperator{\Der}{D}
\DeclareMathOperator{\Fr}{Frob}

\newcommand{\N}{\mathbb{N}}

\newcommand{\F}{\mathbb{F}}

\newcommand{\Proj}{\mathbb{P}^1}

\newcounter{mysub}

\begin{document}
\title{Big Actions with non abelian derived subgroup.}
\author{P. Chrétien and M. Matignon}

\maketitle

\begin{abstract}
For any $p>2$  we give an example of big action $(X,G)$ with non abelian derived
subgroup. It is obtained as a covering of a curve related to the Ree curve.
\end{abstract}
\section{Introduction}
\label{sec:0}
Let $k$ be an algebraically closed field of characteristic $p>0$, a \textsl{big action} is a pair $(X,G)$ where $X/k$ is a smooth, projective, integral curve of genus $g(X) \geq 2$ and $G$ is a finite $p$-group, $G \subseteq \Aut_k(X)$, such that $|G| > \frac{2p}{p-1}g(X)$. Big actions were studied by Lehr and Matignon \cite{LM} then by Matignon and Rocher \cite{MaRo} and Rocher \cite{Ro}. They study big actions $(X,G)$ with an abelian derived group $\Der(G)$. The main goal of this paper is to give the first example, to our knowledge, of big action $(X,G)$ with non abelian $\Der(G)$.

The approach in \cite{MaRo} to construct big actions $(X,G)$ with an abelian $\Der(G)$ is to consider ray class fields of function fields. Let $n \in \N-\lbrace 0 \rbrace$, $q:=p^n$, $m \in \N$, $K:=\F_q(x)$ and $S:= \lbrace (x-a), a\in \F_q \rbrace$ be the set of finite $\F_q$-rational places of $K$. One defines the \textsl{$S$-ray class field mod $m\infty$}, denoted by $K_S^{m\infty}$, as the largest abelian extension $L/K$ with conductor $\leq m\infty$ such that every place in $S$ splits completely in $L$. Denote by $G_S(m):=\Gal(K_S^{m\infty}/K)$ and $C_S(m)/\F_q$ the smooth, projective, integral curve with function field $K_S^{m\infty}/\F_q$. Then, the group of $\F_q$-automorphisms of $\Proj_{\F_q}$ given by $x \mapsto x+a$ with $a \in \F_q$ has a prolongation to a $p$-group $G(m) \subseteq \Aut_{\F_q}(C_S(m))$ with an exact sequence
\begin{align*}
0 \to G_S(m) \to G(m) \to \F_q \to 0.
\end{align*}
Moreover, if $m$ is large enough, then $|G(m)| > \frac{q}{-1+m/2}g(C_S(m))$. If moreover  $\frac{q}{-1+m/2} \geq \frac{2p}{p-1}$, then the pair $(C_S(m),G(m))$ is a big action and one can show that $\Der(G(m))=G_S(m)$.

The above construction leads to big actions $(X,G)$ with an abelian $\Der(G)$. In order to produce big actions with a non abelian derived subgroup, we are going to mimic this construction in a slightly different context. We construct a finite non abelian Galois extension $F/K$ with group $H:=\Gal(F/K)$ such that the group of $\F_q$-automorphisms of $\Proj_{\F_q}$ given by $x \mapsto x+a$ with $a \in \F_q$ has a prolongation to a $p$-group $G \subseteq \Aut_{\F_q}(F)$ with the exact sequence
\[
0 \to H \to G \to \F_q \to 0.
\]
Let $X/\F_q$ be the smooth, projective, integral curve with function field $F/\F_q$, our construction is such that $(X,G)$ is a big action and, as above, one can show that $\Der(G)=H$.

Let $s \in \N- \lbrace 0 \rbrace$ and $q:=3^{2s+1}$, the \textsl{Ree curve $X_R/\F_q$} has been extensively studied, see for example  \cite{Pe}, \cite{HaPe} and \cite{Lau1}. It is a Ray class field over $\F_q(x)$ and equations generalizing this situation for $p \geq 3$ are given in \cite{Au}. For $p=3$, the function field $F/\F_q$ is an extension of $F(X_R)/\F_q$.

\section{Background}
\label{sec:1}
\textbf{Notations :} Let $p$ be a prime number, $q:=p^n$ for some $n \in \N- \lbrace 0 \rbrace$ and et $k$ be an algebraically closed field of characteristic $p>0$.\\

\addtocounter{mysub}{1}
\arabic{mysub}. \textit{Galois Extensions of complete DVRs. } Let $(K,v_K)$ be a local field with uniformizing parameter $\pi_K$ such that $v_K(\pi_K)=1$. Let $L/K$ be a finite Galois extension with group $G$ and separable residual extension, denote by $v_L$ the prolongation of $v_K$ to $L$ such that $v_L(\pi_L)=1$ for some uniformizing parameter $\pi_L$ of $L$. Then $G$ is endowed with a \textsl{lower ramification filtration} $(G_i)_{i \geq -1}$ where $G_i$ is the \textsl{$i$-th lower ramification group} defined by $G_i := \lbrace \sigma \in G \; | \; v_{L}(\sigma(\pi_{L})-\pi_{L}) \geq i+1 \rbrace$. The integers $i$ such that $G_i \neq G_{i+1}$ are called \textsl{lower breaks}. The group $G$ is also endowed with a \textsl{higher ramification filtration} $(G^i)_{i \geq -1}$ which can be computed from the $G_i$'s by means of the \textsl{Herbrand's function} $\varphi_{L/K}$. The real numbers $t$ such that $ \forall \epsilon >0, \; G^{t+\epsilon} \neq G^t$  are called \textsl{higher breaks}. The least integer $m \geq 0$ such that $G^m=\lbrace 1 \rbrace$ is called the \textsl{conductor of $L/K$}. The following theorem is due to Garcia and Stichtenoth, see \cite{GS}.

\begin{thm}\label{GaSt}
Let $K$ be a perfect field of characreristic $p>0$. Let $F/K$ be an algebraic function field of one variable with full constant field $K$ and genus $g(F)$. Consider an elemantary abelian extension $E/F$ of degree $p^n$ such that $K$ is the constant field of $E$. Denote by $E_1, \dots, E_t$, where $t=(p^ n-1)/(p-1)$, the intermediate fields $F \subseteq E_i \subseteq E$ with $[E_i : F]=p$ and by $g(E)$ (resp. $g(E_i)$), the genus of $E/K$ (resp. $E_i/K$). Then
\begin{equation*}
g(E)=\sum_{i=1}^tg(E_i)- \frac{p}{p-1}(p^{n-1}-1)g(F).
\end{equation*}
\end{thm}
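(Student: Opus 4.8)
The plan is to combine the Riemann--Hurwitz (genus--different) formula with Hilbert's different formula and the behaviour of ramification numbers under passage to quotients, the arithmetic being controlled by the combinatorics of hyperplanes in $G:=\Gal(E/F)\cong(\Z/p)^n$. Throughout, write $\mathfrak{d}_{E/F}$ for the different of $E/F$ and identify each $E_i$ with the fixed field $E^{H_i}$ of an index-$p$ subgroup (hyperplane) $H_i\subseteq G$; there are exactly $t=(p^n-1)/(p-1)$ of these.

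First I would apply Riemann--Hurwitz to $E/F$ and to each $E_i/F$, using that $K$ is the full constant field throughout and that the extensions are separable:
\[
2g(E)-2=p^n\bigl(2g(F)-2\bigr)+\deg\mathfrak{d}_{E/F},\qquad 2g(E_i)-2=p\bigl(2g(F)-2\bigr)+\deg\mathfrak{d}_{E_i/F}.
\]
Summing the second identity over $i$ and eliminating the $g(F)$-terms by means of the relation $tp-\tfrac{p}{p-1}(p^{n-1}-1)=p^n$ (and checking that the constants cancel, using $t=(p^n-1)/(p-1)$), one sees that the asserted genus formula is \emph{equivalent} to the purely local identity
\[
\deg\mathfrak{d}_{E/F}=\sum_{i=1}^{t}\deg\mathfrak{d}_{E_i/F}.
\]
Thus the whole problem reduces to proving this different identity.

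Next I would prove it one place of $F$ at a time. Fix a place $P$ of $F$ with inertia group $I=G_0\subseteq G$, a well-defined subgroup since $G$ is abelian. As $\deg\mathfrak{d}_{E/F}$ collects the contribution $\tfrac{|G|}{e}\,d_E(P)\deg P$ at $P$ (ramification index $e=|I|$, the residue degrees cancelling against the number of places above $P$), the claim localizes to
\[
\frac{p^n}{|I|}\,d_E(P)=\sum_{i}\frac{p}{e_{E_i}(P)}\,d_{E_i}(P),
\]
where $E_i/F$ is unramified at $P$ (contributing $0$) exactly when $I\subseteq H_i$, and is totally ramified with $e_{E_i}(P)=p$ otherwise. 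By Hilbert's formula $d_E(P)=\sum_{\sigma\in I\setminus\{1\}}i_G(\sigma)$ with $i_G(\sigma)=v_{\mathfrak{P}}(\sigma\pi_{\mathfrak{P}}-\pi_{\mathfrak{P}})$, and by the standard formula for ramification numbers in the quotient $G/H_i$, namely $i_{G/H_i}(\bar\sigma)=\tfrac{p}{|I|}\sum_{\sigma\mapsto\bar\sigma}i_G(\sigma)$ (the prefactor being the reciprocal of the inertia index $|I\cap H_i|=|I|/p$ of $E/E_i$ at $P$), each $d_{E_i}(P)$ becomes a sum of the very same quantities $i_G(\sigma)$.

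Finally the two sides are matched by a counting argument: a nonzero $\sigma\in I$, viewed in the $\F_p$-vector space $G$, lies outside exactly $p^{n-1}$ of the $t$ hyperplanes, so summing the quotient contributions over all $i$ weights every $i_G(\sigma)$ by the factor $\tfrac{p}{|I|}\cdot p^{n-1}=\tfrac{p^n}{|I|}$, which is precisely the coefficient appearing on the left. I expect the main obstacle to be the local bookkeeping rather than the combinatorics: one must correctly track residue degrees and the number of places above $P$ when unwinding the degree of the different, and must pin down the exact normalisation $\tfrac{p}{|I|}$ in the quotient ramification formula (equivalently, verify it against the transitivity $d_{E/F}=d_{E/E_i}+e_{E/E_i}\,d_{E_i/F}$ of the different). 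Once the hyperplane count $p^{n-1}$ is in hand, the identity $\deg\mathfrak{d}_{E/F}=\sum_i\deg\mathfrak{d}_{E_i/F}$, and hence the genus formula, follow.
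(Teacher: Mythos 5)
Your argument is correct, and there is in fact nothing in the paper to compare it against: the statement is quoted from Garcia--Stichtenoth \cite{GS} without proof, so your write-up supplies a proof the paper omits. Both of your key steps check out. The arithmetic reduction works because $p^n-tp=-\frac{p}{p-1}(p^{n-1}-1)$ and $\frac{p}{p-1}(p^{n-1}-1)-t+1=0$, so the genus formula is indeed equivalent to $\deg\mathfrak{d}_{E/F}=\sum_i\deg\mathfrak{d}_{E_i/F}$. The local step also works: writing $\bar I=IH_i/H_i$ for the inertia group of $E_i/F$, the quotient formula gives $\frac{p}{e_{E_i}(P)}d_{E_i}(P)=\frac{p}{|I|}\sum_{\sigma\in I\setminus H_i}i_G(\sigma)$ whenever $I\not\subseteq H_i$, and a nonzero $\sigma$ lies in exactly $\frac{p^{n-1}-1}{p-1}$ of the $t$ hyperplanes, hence outside $p^{n-1}$ of them, so summing over $i$ weights each $i_G(\sigma)$, $\sigma\in I\setminus\{1\}$, by $\frac{p^n}{|I|}$, matching the left-hand side $\frac{p^n}{|I|}d_E(P)$. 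Two small points worth tightening. First, since the residue extensions need not be trivial, Hilbert's formula should be stated with $i_G(\sigma)=v_{\mathfrak{P}}(\sigma x-x)$ for a generator $x$ of the local ring extension (which exists because $K$ is perfect, hence the residue extensions are separable), not with the uniformizer; this is harmless here because you only evaluate $i_G$ on inertia elements, where one may pass to the inertia field and use $\pi_{\mathfrak{P}}$, but as written the formula would wrongly assign positive $i_G$ to elements of $D\setminus I$. Second, an equivalent and slightly quicker route to the identity $\deg\mathfrak{d}_{E/F}=\sum_i\deg\mathfrak{d}_{E_i/F}$ is the conductor--discriminant formula: every nontrivial character of $G\cong(\Z/p)^n$ has a hyperplane $H_i$ as kernel and therefore factors through exactly one $\Gal(E_i/F)$, so $\disc(E/F)=\prod_i\disc(E_i/F)$; this packages your hyperplane count into the character count and avoids the explicit manipulation of the $i_G(\sigma)$.
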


\addtocounter{mysub}{1}

\arabic{mysub}. \textit{Automorphisms in positive characteristic. } See \cite{LM} for a complete account of big actions. Let $G$ be a group and $\Der(G)$ its derived subgroup.

\begin{defi}
A \textsl{big action} is a pair $(X,G)$ where $X/k$ is a smooth, projective, geometrically connected curve of genus $g(X) \geq 2$ and $G$ is a finite $p$-group, $G \subseteq \Aut_k(X)$, such that $|G| > \frac{2p}{p-1}g(X)$.
\end{defi}

\begin{prop}\label{BA}
Let $(X,G)$ be a big action and $H \subseteq G$ be a subgroup.
\begin{enumerate}
\item There exists a point of $X$, say $\infty$, such that $G$ is the wild inertia group $G_1$ of $G$ at $\infty$ and $\Der(G)$ is the second ramification subgroup $G_2$.
\item One has $g(X/H)=0$ if and only if $\Der(G) \subseteq H$.
\end{enumerate} 
\end{prop}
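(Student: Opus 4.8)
The plan is to reduce everything to the ramification filtration of $G$ at a single point and to spend the big action inequality through Riemann--Hurwitz. First I would apply Riemann--Hurwitz to $X \to Y := X/G$. Since $G$ is a $p$-group in characteristic $p$, all ramification is wild, so at a branch point with inertia $I$ one has $I_0 = I_1 = I$ and the different exponent there is at least $2(|I|-1)$. Dividing by $|G|$ gives $\frac{2g(X)-2}{|G|} = 2g(Y)-2 + \sum_Q \frac{1}{|I_Q|}\sum_{i\ge 0}(|I_{Q,i}|-1) \ge 2g(Y)-2 + r\,\frac{2(p-1)}{p}$, where $r$ is the number of branch points. The big action hypothesis forces the left-hand side to be $< \frac{p-1}{p}$, which immediately gives $g(Y)=0$ and severely restricts the branch locus. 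The refinement that there is a single, totally ramified point, so that $G$ is its inertia group and, the tame quotient being trivial, $G = G_0 = G_1$, is the structural lemma of \cite{LM}; I would invoke it (alternatively, argue that $G$ must fix a point of $X$). Call $\infty$ this point: this is the first assertion of (1).

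The inclusion $\Der(G)\subseteq G_2$ is then immediate: each $G_i/G_{i+1}$ embeds into the additive group of the residue field, so $G_1/G_2$ is abelian, and since $G=G_1$ the quotient $G/G_2$ is abelian, whence $\Der(G)\subseteq G_2$.

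For (2) I would compute genera through the different. Fix $H \subseteq G$. As $\infty$ is the only branch point and totally ramified, $X\to X/H$ is étale away from $\infty$ and $X/H\to\Proj$ is totally ramified over the image of $\infty$, étale elsewhere, of degree $[G:H]$. The lower ramification groups of $H$ are $H_i = H\cap G_i$, so transitivity of the different at $\infty$ gives $|H|\,d = \sum_{i\ge 0}(|G_i|-|H\cap G_i|)$, where $d$ is the different exponent of $X/H\to\Proj$ below $\infty$. Riemann--Hurwitz for $X/H\to\Proj$ reads $2g(X/H)-2 = -2[G:H]+d$, so $g(X/H)=0$ if and only if $d=2([G:H]-1)$, i.e. $\sum_{i\ge 0}(|G_i|-|H\cap G_i|)=2(|G|-|H|)$. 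The terms $i=0,1$ already contribute $2(|G|-|H|)$ since $G_0=G_1=G$, and every further term is nonnegative; hence $g(X/H)=0$ if and only if $G_i\subseteq H$ for all $i\ge 2$, that is, if and only if $G_2\subseteq H$.

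It remains to identify $G_2$ with $\Der(G)$, which finishes (1) and upgrades the criterion above to the stated form of (2). Given $\Der(G)\subseteq G_2$, the missing piece is the reverse inclusion $G_2\subseteq\Der(G)$, and this is the main obstacle: it is false for a general $p$-group action and is exactly where the big action inequality must be used. By the criterion just proved, applied to $H=\Der(G)$, the inclusion $G_2\subseteq\Der(G)$ is equivalent to $g(X/\Der(G))=0$, i.e. to the abelian cover $X/\Der(G)\to\Proj$ having all its ramification concentrated at a single break at $1$. I would establish this by feeding the sharp form of the bound, namely $2g(X)=\sum_{i\ge 2}(|G_i|-1) < \frac{p-1}{p}|G|$ coming from Riemann--Hurwitz at $\infty$, into the genus formula for the elementary abelian sub-quotients (Theorem \ref{GaSt}) together with Hasse--Arf applied to $G/\Der(G)$, forcing $G/\Der(G)$ to be elementary abelian with a unique break and hence $G_2=\Der(G)$. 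Making this last implication quantitative is the delicate point, and it is carried out in \cite{LM}.
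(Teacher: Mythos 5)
The paper does not actually prove this proposition: it is stated as background and deferred wholesale to \cite{LM} (``See \cite{LM} for a complete account of big actions''), so there is no in-paper argument to measure yours against. Judged on its own, your proposal is correct and essentially complete except at one point. The Riemann--Hurwitz computation forcing $g(X/G)=0$, the reduction (partly delegated to \cite{LM}) to a single totally ramified point $\infty$ with $G=G_0=G_1$, the inclusion $\Der(G)\subseteq G_2$ via the embedding of $G_1/G_2$ into the residue field, and above all the equivalence $g(X/H)=0\Leftrightarrow G_2\subseteq H$ obtained from transitivity of the different at $\infty$ are all correct; that last equivalence is exactly the right way to prove (2) and to reduce all of (1) to the single inclusion $G_2\subseteq\Der(G)$.

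That inclusion is the real content of the proposition, and your treatment of it is a gap, albeit an honestly flagged one. The sketch ``Hasse--Arf forces $G/\Der(G)$ to be elementary abelian with a unique break'' is closer to a restatement of the goal than to an argument: Hasse--Arf only says the upper breaks of $G/\Der(G)$ are integers, not that the unique break is $1$, and elementary abelianness of $G/\Der(G)$ is neither needed nor sufficient. By your own criterion, $G_2\subseteq\Der(G)$ amounts to $g(X/V)=0$ for every index-$p$ subgroup $V\supseteq\Der(G)$; each such $X/V\to\Proj$ is an Artin--Schreier cover $y^p-y=f(x)$ branched only over $\infty$, with $g(X/V)=\tfrac{(p-1)(m-1)}{2}$, $m=\deg f$ prime to $p$. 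The crude estimate $2g(X)-2\ge |V|(2g(X/V)-2)$ fed into $|G|>\tfrac{2p}{p-1}g(X)$ only yields $g(X/V)<\tfrac{p+1}{2}$, i.e.\ $m\le 2$, so the case $m=2$ survives and must be killed by a finer argument --- in \cite{LM} this is done by showing that a normal subgroup $H\trianglelefteq G$ with $g(X/H)>0$ makes $(X/H,G/H)$ again a big action, which for $G/V\cong\Z/p$ and $m=2$ forces the impossible strict inequality $p>p$. You should either import that precise lemma from \cite{LM} or carry out this step; as written it is the one place the proof cannot be completed from what is on the page.
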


\begin{defi}Let $\pi : X \to \Proj_{\F_q}$ be a  smooth, projective, geometrically connected $p$-cyclic cover .  Then, $t_a \in \Aut_{\F_q}(\Proj_{\F_q})$  given by $x \mapsto x+a$ with $a \in \F_q$ has a prolongation $\tilde{t}_a \in \Aut_{\F_q}(X)$ if there is a commutative diagram 
\begin{center}
\begin{tikzpicture}[yscale=0.78]
\node (CM1) at (-1,1) {$X$};
\draw[->] (-0.5,1)--(0.5,1);
\node (CM2) at (1,1) {$X$};
\node (sigma1) at (-0.05,1.33) {$\tilde{t}_a$};
\draw[->] (-1,0.7)--(-1,-0.5);
\node (pi1) at (-1.2,0) {$\pi$};
\node (pi2) at (0.8,0) {$\pi$};
\draw[->] (1,0.7)--(1,-0.5);
\node (R1) at (-1,-1) {$ \Proj_{\F_q}$};
\draw[->] (-0.5,-1)--(0.5,-1);
\node (R2) at (1,-1) {$\Proj_{\F_q}$};
\node (sigma2) at (-0.05,-0.72) {$t_a$};
\end{tikzpicture}
\end{center}
\end{defi}
 
\addtocounter{mysub}{1}

%
%
%
%
%

\arabic{mysub}. \textit{Ree Curves. } There are three types of irreducible curves arising as the Deligne-Lusztig variety associated to  a connected, reductive, algebraic group, these are the Hermitian curves, the Suzuki curves and the 
Ree curves (see \cite{Lau1}). In this paper we will focus on the Ree curves which have been described in \cite{Pe} and \cite{HaPe}. Let $s \in \N - \lbrace 0 \rbrace$, $q_0:=3^s$ and $q:=3q_0^2$.

\begin{defi} The \textsl{Ree curves} are the Deligne-Lusztig varieties $X_R/\F_q$ arising from the Ree groups $^2G_2(q)$. 
\end{defi}

\begin{prop} The Ree curve $X_R/\F_q$ is an irreducible curve of genus $g(X_R)=\frac{3}{2}q_0(q-1)(q+q_0+1)$.
\begin{enumerate}
\item[a)] The function field $\F_q(X_R)$ is $\F_q$-isomorphic to $\F_q(x,y_1,y_2)$ defined by
\[
 \left\{
    \begin{array}{ll}
        y_1^q-y_1 &= x^{q_0}(x^q-x) \\
        y_2^q-y_2 &= x^{2q_0}(x^q-x).
    \end{array}
    \right.
\]
\item[b)] The curve $X_R/\F_q$ is optimal. The curve $X_R/\F_{q^n}$ is maximal if and only if $n \equiv 6 \md 12$.
\item[c)] The ramification filtration of $G:=\Gal(\F_q(x,y_1,y_2)/\F_q(x))$ at $\infty$ is
\begin{align*}
G=G_0=\dots=G_{3q_0+1} \supsetneq G_{3q_0+2}=\dots=G_{q+3q_0+1} \supsetneq \lbrace 1 \rbrace,
\end{align*}
and $G_{q+3q_0+1}=\Gal(\F_q(x,y_1,y_2)/\F_q(x,y_1))$.
\item[d)] One has $|\Aut_{\F_q^{\rm alg}}(X_R)|=|\Aut_{\F_q}(X_R)|=q^3(q-1)(q^3+1)$  and the $3$-Sylow subgroups $S_3(X_R)$ of $\Aut_{\F_q}(X_R)$ are such that $(X_R,S_3(X_R))$ are big actions.
\end{enumerate}
\end{prop}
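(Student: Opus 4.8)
The plan is to treat the deep structural inputs---the Deligne--Lusztig realization giving the explicit model in (a), the zeta function needed for (b), and the identification of the full automorphism group in (d)---as facts from the cited work of Pedersen and Hansen--Pedersen, and to make the ramification statement (c) together with the genus formula the self-contained computational core, since all the numerics flow from them. So I would \emph{start} from the model in (a): $F:=\F_q(x)$ and $E:=\F_q(x,y_1,y_2)$ with $y_1^q-y_1=f_1$, $y_2^q-y_2=f_2$, where $f_1=x^{q_0}(x^q-x)$ and $f_2=x^{2q_0}(x^q-x)$, and compute everything from these two additive equations.

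For (c), I would first check that $G=\Gal(E/F)$ is elementary abelian, isomorphic to the additive group $\F_q\oplus\F_q$ via $(a,b)\colon(y_1,y_2)\mapsto(y_1+a,y_2+b)$, and that the cover is unramified away from $x=\infty$ and totally wildly ramified there. The key step is to list the degree-$p$ subextensions: setting $\psi_\mu(y):=\sum_{i=0}^{n-1}\mu^{p^i}y^{p^i}$ (so that $\psi_\mu(y)^p-\psi_\mu(y)=\mu(y^q-y)$), the element $z=\psi_\alpha(y_1)+\psi_\beta(y_2)$ satisfies the Artin--Schreier equation $z^p-z=\alpha f_1+\beta f_2$, and as $(\alpha,\beta)$ ranges over $\F_q^2\setminus\{0\}$ modulo $\F_p^\times$ these exhaust the $t=(q^2-1)/(p-1)$ subfields of degree $p$. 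I would then read off the \emph{upper} breaks as the pole orders of $\alpha f_1+\beta f_2$ after reduction modulo $\wp(F)=\{\,g^p-g\,\}$. The arithmetic to get right is the $p$-adic valuation of the exponents: since $v_3(q+q_0)=v_3(q+2q_0)=s$, Artin--Schreier reduction collapses $x^{q+q_0}$ to exponent $3q_0+1$ and $x^{q+2q_0}$ to exponent $3q_0+2$ (both prime to $p$), while the lower terms $x^{q_0+1}$ and $x^{2q_0+1}$ are already reduced and dominated. Hence the conductor is $3q_0+1$ when $\beta=0$ and $3q_0+2$ when $\beta\neq0$, giving upper breaks $\{3q_0+1,\,3q_0+2\}$ with $G^{u}=\{(0,b)\}=\Gal(E/\F_q(x,y_1))$ for $3q_0+1<u\le3q_0+2$. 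Finally I would apply Herbrand's function $\varphi_{E/F}$ to pass to lower numbering: the jump at $3q_0+1$ is unchanged, while the upper jump at $3q_0+2$ is pulled back to the lower break $q+3q_0+1$ because $[G:G^{(3q_0+1)+}]=q$. This reproduces exactly the filtration in (c).

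The genus then follows from (c) by Riemann--Hurwitz (equivalently by Theorem \ref{GaSt}). With a single totally ramified place above $\infty$, the different exponent is $d_\infty=\sum_{i\ge0}(|G_i|-1)=(3q_0+2)(q^2-1)+q(q-1)$, and $2g(X_R)-2=-2q^2+d_\infty$ yields $g(X_R)=1-q^2+d_\infty/2$, which simplifies---using $q=3q_0^2$---to $\tfrac32 q_0(q-1)(q+q_0+1)$. For (b) I would invoke the zeta function of the Deligne--Lusztig curve: the Frobenius eigenvalues on $H^1$ are $\sqrt q$ times twelfth roots of unity, so $X_R$ attains the relevant bound over $\F_q$ (optimality), and $X_R/\F_{q^n}$ is maximal---every eigenvalue equal to $-q^{n/2}$---exactly when all the root-of-unity factors become $-1$, i.e. when $n\equiv6\pmod{12}$. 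This part I would cite rather than rederive.

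For (d) the identification $\Aut_{\F_q}(X_R)={}^2G_2(q)$, of order $q^3(q-1)(q^3+1)$, together with its equality with the geometric automorphism group, is the content of the Deligne--Lusztig description, which I would cite; that a $3$-Sylow $S_3(X_R)$ then has order $q^3$ is immediate. It remains only to verify the big-action inequality: since $p=3$ one needs $|S_3(X_R)|=q^3>\tfrac{2p}{p-1}g(X_R)=3g(X_R)$, and substituting $q=3q_0^2$ shows $q^3/(3g(X_R))\sim\tfrac23 q_0\ge2$ for all $s\ge1$ (the case $s=1$ checked directly), so $(X_R,S_3(X_R))$ is a big action. \textbf{Main obstacle.} The genuinely hard inputs are the ones I am citing---realizing the function field of the Deligne--Lusztig variety explicitly in (a) and pinning down $\Aut(X_R)$ as the Ree group in (d)---which rest on representation-theoretic facts about ${}^2G_2(q)$. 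Within the self-contained part, the delicate point is the bookkeeping of the Artin--Schreier reduction: getting the $p$-adic valuations of the exponents of $\alpha f_1+\beta f_2$ right, checking that reduction never makes two exponents collide so that no cancellation lowers a break, and then correctly converting upper to lower numbering through $\varphi_{E/F}$. Once those are secured, the genus and the big-action inequality are routine.
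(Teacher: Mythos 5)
Your proposal is correct, but note that the paper itself offers no proof of this proposition: it is stated as background and attributed wholesale to the references \cite{Pe}, \cite{HaPe} and \cite{Lau1}, so there is no argument in the paper to compare against. What you supply is therefore a genuine supplement. Your division of labour is sensible: you cite exactly the parts that rest on the Deligne--Lusztig theory (the explicit model in (a), the Frobenius eigenvalues for (b), the identification of $\Aut(X_R)$ with ${}^2G_2(q)$ in (d)), and you rederive the purely function-field parts. I checked the self-contained computations and they are right: the degree-$p$ subextensions are generated by $z$ with $z^p-z=\alpha f_1+\beta f_2$; since $q+q_0=q_0(3q_0+1)$ and $q+2q_0=q_0(3q_0+2)$ both have $3$-adic valuation $s$, Artin--Schreier reduction gives reduced pole orders $3q_0+1$ (for $\beta=0$) and $3q_0+2$ (for $\beta\neq0$) with no collisions among $\{q_0+1,2q_0+1,3q_0+1,3q_0+2\}$; the upper breaks $3q_0+1<3q_0+2$ with $G^{(3q_0+1)+}=\Gal(E/\F_q(x,y_1))$ of index $q$ pull back under $\psi_{E/F}$ to lower breaks $3q_0+1$ and $3q_0+1+q$; and the different exponent $(3q_0+2)(q^2-1)+q(q-1)$ in Riemann--Hurwitz does simplify to $g=\tfrac{3}{2}q_0(q-1)(q+q_0+1)$. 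The big-action inequality $q^3>3g(X_R)$ reduces to $2q_0^3>3q_0^2+q_0+1$, which holds for all $q_0\geq 3$ (the paper's standing hypothesis is $s\geq1$), consistent with your asymptotic $|S_3|/(3g)\sim\tfrac{2}{3}q_0$. The one point worth making explicit rather than implicit is that each reduced pole order being positive and prime to $p$ is also what guarantees $[E:F]=q^2$ with full constant field $\F_q$, i.e.\ irreducibility of the model in (a).
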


The function field $\F_q(X_R)/\F_q$ and its subextension $\F_q(x,y_1)/\F_q$ have a description as Ray class fields, see below.\\

\addtocounter{mysub}{1}

\arabic{mysub}. \textit{Ray Class Fields. } See \cite{Au} for a detailed account.  Let $K:=\F_q(x)$ and fix an algebraic closure $K^{\rm alg}$ in which all extensions of $K$ are assumed to lie. In this paper, we consider only Galois extensions of function fields of one variable with full constant field $\F_q$ that are totally ramified over a $\F_q$-rational point, say $\infty$, and unramified outside $\infty$. In this setting, the definition of the conductor given above coincide with that given in \cite{Au} Part I.3.

\begin{defi}
Let $S:= \lbrace (x-a), a\in \F_q \rbrace$ be the set of finite $\F_q$-rational places of $K$ and $ m \in \N$. One defines the \textsl{$S$-ray class field mod $m\infty$}, denoted by $K_S^{m\infty}$, as the largest abelian extension $L/K$ with conductor $\leq m\infty$ such that every place in $S$ splits completely in $L$.
\end{defi}

\begin{prop}[\cite{Au} III. Prop. 8.9 b) and Lemma 8.7 c)] \label{propAuer}
Assume that $r:=\sqrt{pq} \in \N$ and let $y_1, \dots, y_{p-1} \in K^{\rm alg}$ satisfy $y_i^q-y_i=x^{ir/p}(x^q-x)$. Then $K_S^{i\infty}=K$ for $1 \leq i \leq p$ and
\[ K_S^{(r+i+1)\infty}=K(y_1,\dots,y_i) \; \; {\rm for} \; \; i \in \lbrace 1, \dots, p-1 \rbrace.\]
\end{prop}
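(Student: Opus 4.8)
The plan is to prove the two assertions by the usual two‑sided strategy for ray class fields: first exhibit the fields $K(y_1,\dots,y_i)$ as subfields of the relevant ray class field (the explicit, constructive direction), and then pin down the ray class field by computing the order of $\Gal(K_S^{m\infty}/K)$ (the arithmetic, upper‑bound direction).

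For the constructive direction, set $m_0:=r/p$. From $r^2=pq$ one gets $q=pm_0^2$ and $r=pm_0$, and since $q=p^n$ with $n$ odd, $m_0=p^{(n-1)/2}$ is a power of $p$. First I would record that each $K(y_j)/K$ is unramified outside $\infty$, the function $f_j:=x^{jm_0}(x^q-x)$ being a polynomial, and that every place of $S$ splits completely: for $a\in\F_q$ one has $f_j(a)=a^{jm_0}(a^q-a)=0$, so the local extension at $(x-a)$ is trivial. The heart of this direction is the conductor at $\infty$. Writing $f_j=x^{q+jm_0}-x^{jm_0+1}$ and factoring the leading exponent as $q+jm_0=m_0(pm_0+j)=m_0(r+j)=p^{(n-1)/2}(r+j)$, I would pass to a degree‑$p$ subextension $\wp(z)=cf_j$, with $\wp(t):=t^p-t$, and reduce modulo $\wp(K)$ using $\wp(x^N)=x^{pN}-x^N$ repeatedly; the leading term collapses from $x^{m_0(r+j)}$ to $x^{r+j}$, whose exponent $r+j=pm_0+j$ is prime to $p$ for $1\le j\le p-1$ and dominates $x^{jm_0+1}$. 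Hence the characters attached to $y_j$ have break $r+j$, and taking the maximum over $j\le i$ shows that $K(y_1,\dots,y_i)/K$ has conductor $r+i+1$ at $\infty$ and splits over $S$, i.e. $K(y_1,\dots,y_i)\subseteq K_S^{(r+i+1)\infty}$.

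For the equality I would compare degrees. The distinct, $p$‑prime reduced breaks $r+1,\dots,r+i$ show that the classes of $f_1,\dots,f_i$ are $\F_q$‑linearly independent in $K/\wp_q(K)$, where $\wp_q(t):=t^q-t$, so $[K(y_1,\dots,y_i):K]=q^i$. It then suffices to prove $|\Gal(K_S^{(r+i+1)\infty}/K)|\le q^i$ and $|\Gal(K_S^{i\infty}/K)|=1$ for $i\le p$. I would obtain these from the structure of the $S$‑ray class group, as in \cite{Au}: idelically, $\Gal(K_S^{m\infty}/K)$ is the quotient of the idele class group of $K=\F_q(x)$ by the subgroup equal to all of $K_v^\ast$ at the split places of $S$, to the full units at the remaining finite places, and to $1+\mathfrak{m}_\infty^m$ at $\infty$; since $\Proj$ has genus $0$ and a single split rational place already kills the degree map, this is a finite $p$‑group whose order is read off from the admissible pole orders at $\infty$. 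Equivalently, via Artin–Schreier duality, its exponent‑$p$ part is dual to the space of classes in $\F_q[x]/\wp(\F_q[x])$ represented by polynomials vanishing on $\F_q$, i.e. multiples of $x^q-x$, all of whose degree‑$p$ components have reduced pole order $\le m-1$. The computation above identifies the admissible breaks, so the dimension count should give exactly $q^i$ for $m=r+i+1$ and $1$ for $m\le p$, the latter because the smallest nontrivial admissible break is $r+1\ge p+1$.

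The main obstacle is precisely this upper‑bound step: one must show that no admissible abelian extension exists beyond the $y_j$'s, which amounts to computing the ray class number exactly rather than merely bounding the breaks of a few explicit functions. The delicate point is that the reduction modulo $\wp(K)$ governing the conductor operates on the degree‑$p$ components, not on the $\wp_q$‑class; so for every polynomial $g$ with $(x^q-x)g$ vanishing on $\F_q$ one must control the minimal reduced pole orders of all components $c(x^q-x)g$ simultaneously, a combinatorial problem on cyclotomic‑coset exponents. Only after this analysis—or, alternatively, after invoking Auer's explicit description of $G_S(m)$ together with the genus formula of Theorem \ref{GaSt} to cross‑check the genera—does the equality with $q^i$ follow. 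Ruling out exponent‑$p^2$ (Witt‑vector) contributions in the range $m\le r+p$ is part of the same bound.
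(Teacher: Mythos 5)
The paper does not prove this proposition at all: it is imported verbatim from Auer (\cite{Au}, III.\ Prop.\ 8.9 b) and Lemma 8.7 c)), so there is no internal argument to measure yours against. Judged on its own terms, your constructive half is correct and is essentially Auer's: the places of $S$ split because $f_j(a)=a^{jm_0}(a^q-a)=0$ for $a\in\F_q$; the extensions are unramified outside $\infty$; the degree-$p$ subextensions of $y_j^q-y_j=f_j$ are indeed $\wp(z)=cf_j$, $c\in\F_q^*$; and the reduction $x^{pN}\equiv x^N \bmod \wp(K)$ collapses $cx^{m_0(r+j)}$ to a nonzero multiple of $x^{r+j}$, with $r+j=pm_0+j$ prime to $p$ and dominating the exponent $jm_0+1$ of the other term. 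This gives conductor $r+j+1$ for each $K(y_j)/K$, hence $K(y_1,\dots,y_i)\subseteq K_S^{(r+i+1)\infty}$, and the pairwise distinct prime-to-$p$ reduced pole orders give $[K(y_1,\dots,y_i):K]=q^i$.

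The genuine gap is the one you flag yourself: the reverse inclusion. Saying that the dimension count ``should give exactly $q^i$'' and that the smallest admissible break is $r+1$ is not a step toward the proposition; it \emph{is} the proposition. Nothing in your argument excludes a further abelian extension, split over $S$, unramified outside $\infty$ and of conductor $\le r+i+1$, that is not generated by the $y_j$ (including possible exponent-$p^2$, i.e.\ Witt-vector, pieces), so the upper bound $|\Gal(K_S^{(r+i+1)\infty}/K)|\le q^i$ remains an assertion. For the first statement, $K_S^{i\infty}=K$ for $i\le p$, there is a short self-contained argument you could have substituted for the deferred classification: $G_S(m)$ is a $p$-group (a tame cyclic quotient would yield a cover of $\Proj$ ramified at a single point, which Riemann--Hurwitz forbids), so a nontrivial $K_S^{p\infty}$ would contain a degree-$p$ Artin--Schreier subfield with reduced break $d\le p-1$, hence genus at most $(p-1)(p-2)/2$, yet with at least $pq$ rational points over $S$; this contradicts the Weil bound as soon as $(p-1)\sqrt{q}\,(\sqrt{q}-p+2)>1$, which holds in the situation of this paper where $q\ge p^3$. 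For the equality at conductor $(r+i+1)\infty$, however, one genuinely needs Auer's computation of the order of the $S$-ray class group (or an equivalent Oesterl\'e-type bound relating conductors to the number of split places), and that computation is absent from your proposal.
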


\section{Results}
\label{sec:1}
\textbf{Notations : } Let $p>2$ be a prime number, $s \in \N - \lbrace 0 ,1  \rbrace$, $q_0:=p^s$ and $q:=pq_0^2$. Let $(\gamma_i)_{i=1}^{2s+1}$ be a $\F_p$-basis of $\F_q$,
\begin{align*}
 \Fr_p : K^{\rm alg}& \longrightarrow K^{\rm alg}\\
       x &  \longmapsto x^p,
\end{align*}
 and $\Fr_q=\Fr_p^{2s+1}$. Let $K:=\F_q(x)$ and $F/\F_q$ be the function field of one variable with full constant field $\F_q$ defined by 
\[
 \left\{
    \begin{array}{ll}
        y_1^q-y_1 &= x^{q_0}(x^q-x) =:f_1(x) \\
        y_2^q-y_2 &= x^{2q_0}(x^q-x) =: f_2(x)\\
        v_1^q-v_1 &= y_1^qx-x^qy_ 1\\
        v_2^q-v_2 &= y_2^qx-x^qy_ 2\\
        w^q-w     &=f_2(x)y_1-f_1(x)y_2 = y_2^qy_1-y_1^qy_ 2.
    \end{array}
    \right.
\]
\textbf{Remark : } The function field $F/\F_q$ is also defined by the equations 
\[
 \left\{
    \begin{array}{ll}
        y_1^q-y_1 &= x^{q_0}(x^q-x) =:f_1(x) \\
        y_2^q-y_2 &= x^{2q_0}(x^q-x) =: f_2(x)\\
        v'^q_1-v'_1 &= x^{q_0}(x^{2q}-x^2)=:g_1(x)\\
        v'^q_2-v'_2 &= x^{2q_0}(x^{2q}-x^2) =:g_2(x)\\
        w'^q-w'     &=2y_1f_2(x)+f_1(x)f_2(x),
    \end{array}
    \right.
\]
allowing us to view $F$ as the compositum of extensions of $\F_q(x,y_1)$.

\begin{thm}\label{thm1}
Let $X/\F_q$ be the smooth, projective, integral curve with function field $F/\F_q$. Let ${H \subseteq \Aut_{\F_q}(K)}$ be the subgroup of translations  $x \mapsto x+a$, ${a \in \F_q}$,  then any $h \in H$ has $q^5$ prolongations to $F$, the extension $F/K^H$ is Galois, the group $G:=\Gal(F/K^H)$ has order $q^6$, the pair $(X,G)$ is a big action and $\Der(G)$ is a non-abelian group.
\end{thm}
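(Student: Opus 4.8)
The plan is to exhibit the prolongations of translations explicitly, and then identify the ramification structure at $\infty$ to deduce that $(X,G)$ is a big action with nonabelian derived subgroup. First I would analyze the tower $K \subseteq \F_q(x,y_1,y_2) \subseteq F$. The bottom part $\F_q(x,y_1,y_2)/K$ is the Ree function field, whose automorphism behavior under the translations $x \mapsto x+a$ is understood from the work on Ree curves: each Artin--Schreier equation $y_i^q - y_i = f_i(x)$ transforms under $x \mapsto x+a$ into a new Artin--Schreier equation whose right-hand side differs from $f_i(x)$ by something of the form $(\text{power of }x)$ plus a $\wp$-trivial correction, and one solves for the induced action $y_i \mapsto y_i + (\text{polynomial in }x, a)$. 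I expect that prolonging a translation forces $2s+1$ free $\F_q$-parameters (one for each additive $\wp^{-1}$-ambiguity) at the $y$-level, and the remaining equations for $v_1, v_2, w$ pin down further parameters, so that counting the total ambiguity gives exactly $q^5$ prolongations of each $h \in H$. Establishing this count, and in particular checking that the five defining equations are each stable (up to $\wp$-coboundaries) under any chosen prolongation, is the technical core.

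Once the prolongations are shown to exist and to number $q^5$, I would argue that $F/K^H$ is Galois of degree $q^6$: the extension $F/K$ is Galois with group $H := \Gal(F/K)$ of order $q^5$ (five layers of elementary abelian $p$-extensions of degree $q$ each, by the structure of the defining equations), and the $q$ translations in $H \subseteq \Aut_{\F_q}(K)$ together with the $q^5$ prolongations assemble into a group $G$ of order $q^6$ fitting in the exact sequence
\[
0 \to H \to G \to \F_q \to 0
\]
exactly as in the ray class field construction recalled in the introduction. Normality of $F/K^H$ follows because $G$ is precisely the group of $\F_q$-automorphisms of $F$ lying over translations, and $K^H$ is its fixed field; that $G$ is a $p$-group is immediate since $|G| = q^6 = p^{6(2s+1)}$.

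To prove $(X,G)$ is a big action I would compute, or bound, the genus $g(X)$ and exhibit a point $\infty$ of $X$ at which $G$ equals the wild inertia group, invoking Proposition~\ref{BA}. The single place $\infty$ of $K$ is totally ramified in each Artin--Schreier layer, so $G$ fixes a unique point above it and acts there with large ramification. I would use Theorem~\ref{GaSt} repeatedly to express $g(X)$ through the genera of the intermediate degree-$p$ subextensions, each of which is computed from its conductor via the ramification data; the right-hand sides $f_1, f_2$ and the mixed terms governing $v_1, v_2, w$ have controlled pole orders at $\infty$, yielding breaks in the ramification filtration that I can read off. The key inequality $|G| > \frac{2p}{p-1} g(X)$ should then follow because $|G| = q^6$ grows like $p^{6(2s+1)}$ while $g(X)$ is dominated by a product of the conductor-type quantities that scale like $q^5$ times a bounded factor; making this comparison precise, and verifying that the ramification jumps place $G_1 = G$ and $G_2 = \Der(G)$ correctly, is where the main obstacle lies.

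Finally, for the nonabelianness of $\Der(G)$, I would identify $\Der(G)$ with $H = \Gal(F/K^H \cap \overline{K})$, i.e.\ with the kernel $H$ of $G \to \F_q$, exactly as the analogous statement $\Der(G(m)) = G_S(m)$ holds in the ray class setting. By Proposition~\ref{BA}(2), $g(X/\Der(G)) = 0$ forces $\Der(G) = H$ here since $F/K$ is totally ramified over $\infty$ and unramified elsewhere with $\Proj_{\F_q}$ as quotient. It then remains to show that $H$ itself is nonabelian: this is the crucial novelty and I expect it to come from the equations for $v_1, v_2, w$, whose right-hand sides $y_1^q x - x^q y_1$, $y_2^q x - x^q y_2$, and $y_2^q y_1 - y_1^q y_2$ couple the generators nontrivially, so that commutators of the corresponding automorphisms act as translations in the $w$-coordinate rather than trivially. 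Concretely I would pick two automorphisms $\sigma, \tau \in H$ raising $y_1$ and $y_2$ respectively and compute $[\sigma,\tau]$ acting on $w$, showing it is a nontrivial element; the structure $w^q - w = y_2^q y_1 - y_1^q y_2$ is precisely the bilinear pairing that produces a Heisenberg-type noncommutation. Verifying this commutator computation, and confirming it lands inside $G_2$, is the decisive step that distinguishes this example from all previously known big actions.
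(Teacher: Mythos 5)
Your outline follows the same route as the paper (prolongation count via Artin--Schreier coboundaries, genus via Theorem~\ref{GaSt}, identification of $\Der(G)$ with $\Gal(F/K)$, and a Heisenberg-type commutator on $w$ --- the paper's explicit $\sigma_i,\tau_i$ are exactly what you describe), but two steps are genuinely incomplete. The more serious one is your justification that $\Der(G)=\Gal(F/K)$. Proposition~\ref{BA}(2) only gives the inclusion $\Der(G)\subseteq\Gal(F/K)$ (because $X/\Gal(F/K)\cong\Proj_{\F_q}$ has genus $0$); the assertion that ``$g(X/\Der(G))=0$ forces $\Der(G)=H$'' proves nothing in the other direction, since $g(X/\Der(G))=0$ holds automatically and is perfectly compatible with $\Der(G)$ being a proper subgroup of $\Gal(F/K)$. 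The paper closes this gap with a ray class field argument you have not supplied: setting $L:=F^{\Der(G)}\supseteq K$, the field $L$ has genus $0$, so Riemann--Hurwitz bounds the conductor of the abelian extension $L/K$ by $2\infty$; every place $(x-a)$, $a\in\F_q$, splits completely in $L$ (each right-hand side of the defining equations vanishes at $x=a$); hence $L\subseteq K_S^{2\infty}=K$ by Proposition~\ref{propAuer}, forcing $L=K$. Without this input the equality --- and hence the relevance of your commutator computation to $\Der(G)$ --- is not established.

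The second gap is in the genus computation, which you defer to ``reading off'' pole orders. For the layers $y_{2,i},v'_{1,i},v'_{2,i}$ over $K(y_1)$ this is indeed routine, but for the $w'$-layer the right-hand side $2y_1f_2(x)+f_1(x)f_2(x)$ involves $y_1$, so its pole order at the place of $K(y_1)$ above $\infty$ cannot be read from $x$ alone. The paper's ``crucial point'' is the construction of an explicit uniformizer $z$ of $K(y_1)$ at infinity together with a power series expansion of $y_1$ in $\F_q[|z|]$ (via the substitution $x=z^{-q}+z^{a_1}-z^{a_2}$ and Hensel's lemma), precise enough to extract the principal part of $\gamma_i(2y_1f_2+f_1f_2)$ modulo $(\Fr_p-\Id)(\F_q((z)))$ and hence the conductor $2+pq_0+2q+pq_0q$. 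Your plan names this as ``the main obstacle'' but offers no mechanism for it. Finally, your heuristic that $g(X)$ is $q^5$ times a bounded factor is off: the computation yields $g(F)$ of order $q^6/q_0$, and the inequality $|G|>\frac{2p}{p-1}g(X)$ holds because the ratio $|G|/g(F)$ is of order $q_0>\frac{2p}{p-1}$, not because $g(X)=O(q^5)$.
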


\begin{proof} Let $a \in \F_q$ and $t_a \in \Aut_{\F_q}(K)$ given by $x \mapsto x+a$. Let $\sigma : F \hookrightarrow K^{\rm alg}$ be a morphism such that $\sigma|_K=t_a$, an easy computation shows that 
\begin{align*}
(\sigma(y_1)^q-\sigma(y_1))-(y_1^q-y_1),& \; \; (\sigma(y_2)^q-\sigma(y_2))-(y_2^q-y_2),\\
(\sigma(v'_1)^q-\sigma(v'_1))-(v'^q_1-v'_1),& \; \;(\sigma(v'_2)^q-\sigma(v'_2))-(v'^q_2-v'_2),\\
 (\sigma(w')^q-\sigma(w'))&-(w'^q-w'), 
\end{align*}
are in $\Fr_q(F)$, thus the elements of $H$ have $q^5$ prolongations to $F$ and $F/K^H$ is a Galois extension of degree $q^6$. 

Using Theorem \ref{GaSt}, one computes the genus of $K(y_1,y_{2,i})$ defined by the equations
\[
 \left\{
    \begin{array}{ll}
        y_1^q-y_1 &=f_1(x) \\
        y_{2,i}^p-y_{2,i} &=\gamma_if_2(x).
          \end{array}
    \right.
\]
One obtains 
\[  g(K(y_1,y_{2,i}))=\frac{q}{2q_0}\big[qp+q_0p-q_0-1\big]. \]
Let $K(y_1,v'_{1,i})$ and $K(y_1,v'_{2,i})$ be the function fields defined by the equations

\begin{minipage}[t]{0.45\linewidth}
\[
 \left\{
    \begin{array}{ll}
        y_1^q-y_1 &=f_1(x) \\
        v'^p_{1,i}-v'_{1,i} &=\gamma_ig_1(x),
          \end{array}
    \right.
\]
\end{minipage}\begin{minipage}[t]{0.45\linewidth}
\[
 \left\{
    \begin{array}{ll}
        y_1^q-y_1 &=f_1(x) \\
        v'^p_{2,i}-v'_{2,i} &=\gamma_ig_2(x).
          \end{array}
    \right.
\]
\end{minipage}\\

\noindent One computes their genera as above and one obtains
\begin{align*}
g(K(y_1,v'_{1,i}))&=\frac{q}{2q_0}\big[2qp-q-1\big],\\
g(K(y_1,v'_{2,i}))&=\frac{q}{2q_0}\big[2qp+q_0p-q_0-q-1\big]. 
\end{align*}

Let $K(y_1,w'_i)$ be the function field defined by the equations
\[
 \left\{
    \begin{array}{ll}
        y_1^q-y_1 &=f_1(x) \\
        w'^p_i-w'_i &=\gamma_i\big[2y_1f_2(x)+f_1(x)f_2(x)\big]=\gamma_iF(z),
          \end{array}
    \right.
\]
in order to compute its genus, one needs an expression of $y_1$ and $x$ in terms of a uniformizing parameter of $K(y_1)$ at infinity, this is the crucial point of the proof. One defines $z $ by
\begin{equation*}
a_1:=\frac{q^2-qq_0-q}{q_0}, \; a_2:=\frac{q^2-q_0-q}{q_0} \; {\rm and \;} x=z^{-q}+z^{a_1}-z^{a_2}.
\end{equation*}
Then, one shows that this change of variable completely splits the place $x=\infty$ in $K(y_1)$. One puts
\begin{align*}
b_1:=&a_1-qq_0, \; \; \; b_2:=a_2-qq_0,\\
y_T:=&\frac{1}{z^{q+q_0}}+z^{b_1}-z^{b_2}+z^{a_1q_0-q}-z^{a_2q_0-q}\\
&+z^{a_1(1+q_0)}+z^{a_2(1+q_0)}-z^{a_1+a_2q_0}-z^{a_1q_0+a_2}+T.
 \end{align*}
By expanding $y_T^q-y_T-f_1(x)$ one gets for some $G(z) \in \F_q[|z|]$ 
\begin{equation*}\label{eq:yT}
y_T^q-y_T-f_1(x)=z^{qb_1}(1+zG(z))+T^q-T.
\end{equation*}
According to Hensel's lemma, the equation $T^q-T+z^{qb_1}(1+zG(z))=0$ in $\F_q[|z|][T]$ has a solution $T_0 \in \F_q[|z|]$ such that $v_z(T_0)>0$, thus $v_z(T_0)=qb_1$. So one has constructed a solution $y_{T_0} \in \F_q[|z|]$ to the equation  $Y^q-Y=f_1(x)$. Whence, one has the following diagram

\begin{center}
\begin{tikzpicture}[scale=1]

\draw[-] (-0.5,-0.6)--(-1.5,0.7);
\draw[-] (0.4 ,-0.6)--(1.5,0.7);

\node (Fqx) at (0,-1) {$\F_q((\frac{1}{x}))$};
\node (Fqy) at (1.5,1)  {$\F_q((\frac{1}{x}))[y_{T_0}]$};
\node (Fqz) at (-1.5,1) {$\F_q((z))$};
\node (inc) at (-0.2,1) {$\supseteq$};

\end{tikzpicture}
\end{center}
Since $[\F_q((z)): \F_q((\frac{1}{x}))]=q$ and $[\F_q((\frac{1}{x}))[y_{T_0}]: \F_q((\frac{1}{x}))]=q$, one has ${\F_q((\frac{1}{x}))[y_{T_0}]=\F_q((z))}$, i.e. $z$ is a uniformizing parameter of $K(y_1)$ at infinity. Note that letting $y_T:=\frac{1}{z^{q+q_0}}+z^{b_1}+T$ and using the same process, one still obtains that $z$ is a uniformizing parameter of $K(y_1)$ at infinity, but in this case one has $v(T_0)=b_2$ and one needs a more accurate expansion of $y_1$ in order to compute the genus of $K(y_1,w'_i)$, see below. 

Then, one expands $\gamma_iF(z) \in \F_q((z))$ in terms of $z$ and $T_0$ and one reads its principal part $P_i(z)$. Note that $v_z(T_0)=qb_1$ implies that the terms in $\gamma_iF(z)$ where $T_0$ appears do not disturb $P_i(z)$. 
One has 
\begin{equation*}
 P_i(z) = \gamma_i\Big[\frac{1}{z^{3q_0q+2q^2}}+\frac{1}{z^{q^2+q+3q_0q}}- \frac{1}{z^{q_0+q+qq_0-a_2q_0+q^2}}-\frac{1}{z^{q_0+q+2qq_0+q^2}}\Big],
\end{equation*}
and $\md (\Fr_p-\Id)(\F_q((z)))$
\begin{equation*}
 P_i(z) \equiv \frac{\gamma_i^{q/q_0}}{z^{3+2pq_0}}+\frac{\gamma_i}{z^{1+3q_0+q}}- \frac{\gamma_i^{q/q_0}}{z^{1+pq_0+q-a_2+pq_0q}}-\frac{\gamma_i^{q/q_0}}{z^{1+pq_0+2q+pq_0q}}.
\end{equation*}
Thus, the conductor of the extension $K(y_1,w'_i)/K(y_1)$ is $2+pq_0+2q+pq_0q$ and applying the Riemann-Hurwitz formula, one obtains
\[ g(K(y_1,w'_i))=\frac{q}{2q_0} \big[2pq+2pq_0-q_0-q-1 \big]. \]
Applying \cite{GS} Theorem 2.1, one obtains that $g(F)$ equals
\begin{align} \label{genus}
 \frac{q-1}{p-1}&\big[ q^3g(K(y_1,w'_i))+q^2g(K(y_1,v'_{2,i}))+qg(K(y_1,v'_{1,i})) \nonumber \\
&+g(K(y_1,y_{2,i})) \big]-\frac{q-1}{p-1}\frac{q}{2q_0}(q-1).
 \end{align}
Then, an easy computation shows that $(X,G)$ is a big action. Actually, the leading term in equation \eqref{genus} is $\frac{q-1}{p-1}q^3g(K(y_1,w'_i))$ which, surprisingly, is not too large compared to $|G|$, that is why $(X,G)$ is a big action (note that $\lim_ {p\to \infty}\frac{|G|}{g(F)} = q_0$, checking the inequality  $|G|>\frac{2p}{p-1}g(F)$ being left to the reader). 

One shows that $\Der(G)=\Gal(F/K)$. Let $L:=F^{\Der(G)}$, according to Proposition \ref{BA} \textit{2}, one has $\Der(G) \subseteq \Gal(F/K)$, whence $K \subseteq L$. According to Proposition \ref{BA} \textit{2}, the function field $L$ has genus $0$, so the Riemann-Hurwitz formula implies that the conductor of $L/K$ is $\leq 2\infty$. Let $S$ be the set of finite $\F_q$-rational places of $K$, i.e. $S:= \lbrace (x-a), a \in \F_q \rbrace$. Then $L/K$ is an abelian extension with conductor $\leq 2\infty$ such that every place in $S$ splits completely in $L$, then $L \subseteq   K_S^{2\infty}$. According to Proposition \ref{propAuer}, $K_S^{2\infty}=K$, i.e. $L=K$ and $\Der(G)=\Gal(F/K)$.

One shows that $\Der(G)=\Gal(F/K)$ is non abelian. The $K$-automorphisms of $F/K$ defined by\\

   \begin{minipage}[c]{.46\linewidth}
     \[
 \left\{
    \begin{array}{ll}
       \sigma_i(y_1) &= y_1+\gamma_i\\
       \sigma_i(y_2) &= y_2 \\
       \sigma_i(v'_1) &= v'_1+\gamma_i\\
       \sigma_i(v'_2) &= v'_2 \\
       \sigma_i(w) &= w+\gamma_iy_2 \\
    \end{array}
    \right.
\]
   \end{minipage} and 
   \begin{minipage}[c]{.46\linewidth}
    \[
 \left\{
    \begin{array}{ll}
       \tau_i(y_1) &= y_1\\
       \tau_i(y_2) &= y_2+\gamma_i \\
       \tau_i(v'_1) &= v'_1+\gamma_i\\
       \tau_i(v'_2) &= v'_2 \\
       \tau_i(w) &= w-\gamma_iy_1 \\
    \end{array}
    \right.
\]
   \end{minipage}\\
are such that $\tau_i$ and $\sigma_{j}$ do not commute since $p > 2$.
\end{proof}

\bibliographystyle{alpha} 
\bibliography{rcfutf}

\end{document}